\title[\resizebox{4.5in}{!}{Dispersive shocks and spectral analysis for linearized Quantum Hydrodynamics}]{Dispersive shocks and spectral analysis for linearized Quantum Hydrodynamics}
\author{Corrado Lattanzio, Pierangelo Marcati, Delyan Zhelyazov}
\address[Corrado Lattanzio]{DISIM, Department of Information Engineering, Computer Science and Mathematics \\ University of L'Aquila, Italy}
\email{corrado@univaq.it}
\address[Pierangelo Marcati]{GSSI, Gran Sasso Science Institute, L'Aquila, Italy}
\email{pierangelo.marcati@univaq.it}
\address[Delyan Zhelyazov]{GSSI, Gran Sasso Science Institute, L'Aquila, Italy \newline and DISIM, Department of Information Engineering, Computer Science and Mathematics \\ University of L'Aquila, Italy}\email{delyan.zhelyazov@gssi.it }
\newtheorem{theorem}{Theorem}
\newtheorem{lemma}[theorem]{Lemma}
\newtheorem{proposition}[theorem]{Proposition}
\theoremstyle{remark}
\begin{document}

\keywords{quantum hydrodynamics, traveling waves, spectral stability, dispersive-diffusive shock waves}
\subjclass[2010]{76Y05, 35Q35}

\maketitle

\begin{abstract}
In this paper we perform the analysis of spectral properties of the linearized system around constant states and dispersive shock for a 1-D compressible Euler system with dissipation--dispersion terms. The dispersive term is originated by the quantum effects described through the Bohm potential, as customary in Quantum Hydrodynamic models. The analysis performed in this paper includes the computation of the linearized operator and the spectral stability through the Evans function method.
\end{abstract}

%

\section{Introduction}
The aim of this paper is to   study the following Euler system with dissipation--dispersion terms: 
\begin{equation}
\label{eq_sys}
\left\{
\begin{array}{ll}
\rho_t+m_x=0,\\
m_t+\Big{(}\frac{m^2}{\rho}+\rho^\gamma\Big{)}_x=\epsilon \mu m_{xx}+\epsilon^2 k^2 \rho \Big(\frac{(\sqrt{\rho})_{xx}}{\sqrt{\rho}}\Big{)}_x,
\end{array}
\right.
\end{equation}
where $t\geq0$, $x \in \mathbb{R}$, $\rho=\rho(t,x)>0$, $m=m(t,x)$, $\gamma \geq 1$, $0<\epsilon\ll1$, $\mu>0$, $k>0$. The positive coefficients $\epsilon \mu$ and $\epsilon^2 k^2$ stand   for the viscosity and dispersive coefficients, respectively, and $\rho^\gamma$ is the pressure.
The particular shape of the dispersion terms is due to the Bohm potential, and the resulting system is referred to as the quantum hydrodynamics system, being  used for instance in superfluidity or to model semiconductor devices. 
We are in particular interested in the analysis of the linearized version of \eqref{eq_sys} around special solutions, as constant states and   \emph{dispersive shocks}, namely solutions of  \eqref{eq_sys} written as travelling waves
\begin{equation*}
\rho=P\Big{(}\frac{x-s t}{\epsilon}\Big{)}\mbox{, }m=J\Big{(}\frac{x-s t}{\epsilon}\Big{)}\mbox{. }
\end{equation*}
As customary,  the speed  $s \in \mathbb{R}$  of the travelling wave and its limiting end states 
\begin{equation*}
\lim_{y\rightarrow \pm \infty}P(y)=P^{\pm}\  \hbox{and}\ \lim_{y\rightarrow \pm \infty}J(y)=J^{\pm}
\end{equation*}
are assumed to verify the Rankine--Hugoniot conditions:
\begin{align}
\label{Rankine_Hugoniot}
J^+-J^-&=s(P^+-P^-),\\
\label{Rankine_Hugoniot2}
\Big{(}\frac{J^2}{P}+P^{\gamma}\Big{)}^+-\Big{(}\frac{J^2}{P}+P^{\gamma}\Big{)}^-&=s(J^+-J^-).
\end{align}
The existence of such solution is studied in full details in the companion paper \cite{Zhelyazov}, where  in particular the interplay between the diffusive and dispersive effects is analyzed  in connections with the existence, monotonicity, and stability of such solutions.  The notion and study of the effect of dispersive terms have been first considered 
by \cite{Gurevich, Sagdeev}, see also \cite{Gurevich1, Hoefer, Nov, Zak}, while a fairly complete analysis, via the Whitham modulation theory, has been investigated in \cite{Hoefer1}, which also includes a wide bibliography on these topics.  The first attempt to analyze the spectral theory of the linearized operator around dispersive shocks has been discussed in  \cite{Humpherys} regarding the case of  $p$-system with real viscosity and linear capillarity, but only in the case of monotone shocks, while the mathematical theory  of the 
quantum hydrodynamic  systems has been developed in \cite{AM1, AM2, AMtf, AMDCDS, AS, Michele1, Michele, DFM, DM1, DM,  GLT}.

  In the present paper,  we shall set up the dynamical systems solved by these profiles, obtained as heteroclinic connections, and give some numerical computations of them in the next two sections.
Moreover  Section \ref{section_linerazation} is devoted to  the study of the linearization of \eqref{eq_sys}, both around a profile and around a constant state, the latter case being relevant for the subsequent analysis, which strongly relies on the linearized, constant coefficient, asymptotic operators, obtained as the parameter of the profile tends to $\pm\infty$.
 In Section \ref{section_point_spectrum} we treat the analysis of the spectrum of linearized operators, in particular giving a resolvent estimate for the constant coefficient case, namely, for the linearized operator about a constant state. To investigate the point spectrum of the linearized operator around a profile, one needs to supplement the  resolvent estimate (available for large eigenvalues) with the study of the Evans function (see, for instance \cite{Sandstede}); the corresponding numerical results about it are collected in Section \ref{section_the_evans_function}.

\section{The equations for the profile}\label{section_2D_system}
In order to analyze  travelling wave profiles, we plug 
\begin{equation*}
\rho=P\Big{(}\frac{x-s t}{\epsilon}\Big{)}\mbox{, }m=J\Big{(}\frac{x-s t}{\epsilon}\Big{)}
\end{equation*}
in \eqref{eq_sys}, and
we rewrite the Bohm potential in   conservative form as follows:
\begin{equation*}
\rho \Big(\frac{(\sqrt{\rho})_{xx}}{\sqrt{\rho}}\Big{)}_x=\frac{1}{2}\Big{(}\rho (\ln \rho)_{xx}\Big{)}_x\mbox{. }
\end{equation*}
After substituting the profiles $P$ and $J$ in the system \eqref{eq_sys} and multiplying by $\epsilon$ we obtain
\begin{align}
- s P'+J'&=0,\label{preq1}\\
-s J' + \Big{(}\frac{J^2}{P}+P^\gamma\Big{)}'&=\mu J''+\frac{k^2}{2}(P(\ln P)'')'\mbox{, }
\label{preq2}
\end{align}
where $'$ denotes $d/dy$ and $P=P(y)$, $J=J(y)$. 
Integrating equation \eqref{preq1}, we get 
\begin{equation*}
-s \int_{-\infty}^y P'(x) dx + \int_{-\infty}^y J'(x)dx=0,
\end{equation*}
that is
\begin{equation*}
J(y)-sP(y)=J^--sP^-.
\end{equation*}
We can also integrate \eqref{preq1} from $y$ to $+\infty$ to get
\begin{equation*}
J(y)-sP(y)=J^+-sP^+,
\end{equation*}
that is
\begin{equation}
\label{equation_j}
J(y)=sP(y)-A,
\end{equation}
 with
\begin{equation*}
A=s P^{\pm}-J^{\pm},
\end{equation*}
thanks to the Rankine-Hugoniot condition \eqref{Rankine_Hugoniot}.\\
Similarly, we can integrate \eqref{preq2} up to $\pm \infty$, after substituting the expression for $J(y)$ into it, to obtain 
 the planar ODE
\begin{equation}\label{2Dsys}
P''=\frac{2}{k^2} f(P) - \frac{2 s \mu}{k^2} P' + \frac{P'^2}{P}.
\end{equation}
In \eqref{2Dsys},
\begin{align}
f(P)&=-s(s P-A)+\frac{(sP-A)^2}{P}+P^{\gamma}-B\nonumber\\
&=P^{\gamma}-(As+B)+\frac{A^2}{P}
\label{fun_f}
\end{align}
and the constant $B$ is given by 
\begin{equation*}
B=-s J^{\pm}+\Big{(}\frac{J^2}{P}+P^{\gamma}\Big{)}^{\pm},
\end{equation*}
as follows from \eqref{Rankine_Hugoniot2}.

Introducing the new variable $P'=Q$, we rewrite \eqref{2Dsys} as a first-order system of ODEs
\begin{align}
\label{sys_ODE1}
P'=Q&=f_1,\\
Q'=\frac{2 f(P)}{k^2}-\frac{2\mu s}{k^2} Q+\frac{Q^2}{P}&=f_2.\label{sys2}
\end{align}
Finally, it is worth to observe that the constants $A,B$ in $f(P)$ can be expressed solely in terms of $P^{\pm}$ as follows:
\begin{align*}
f(P)&= P^\gamma +\frac{P^-P^+}{P}\frac{(P^+)^\gamma-(P^-)^\gamma}{P^+-P^-}\nonumber\\
&\ -\frac{(P^+)^{\gamma+1}-(P^-)^{\gamma+1}}{P^+-P^-}.
\end{align*}
\section{Numerical computation of the profiles}
\label{section_numerical_profiles}
The existence of profiles under different assumptions on the asymptotic states is proved in \cite{Zhelyazov}; here we collect some numerical computations about such profiles in the parameters' range where the existence is proved. 
In what follows, we briefly recall     the case of large shocks, which in particular includes the possibility of non monotone profiles.\\
To this aim, let us denote
\begin{equation*}
F(P)= \frac{P^{\gamma+1}}{\gamma+1}-(As+B)P+A^2 \ln P,
\end{equation*}
and 
\begin{align*}
&F_1(P)=F(P)-\Big{(}\frac{s \mu P}{k}\Big{)}^2-F(P^-),\\
&\tilde{F}_1(\xi)=F(\xi)-\Big{(}\frac{s \mu \xi}{k}\Big{)}^2-F(P^+)
\end{align*}
Then the next theorem holds.
\begin{theorem}\label{lemma_global_existence} Let $s>0$ and $P^+<P^-$. If $F_1(P)\leq 0$ for $0<P<P^+$, then there is a traveling wave profile, connecting $[P^-,0]$ to $[P^+,0]$. If in addition
\begin{equation*}
\frac{s \mu}{k}<\sqrt{-2 f'(P^+)},
\end{equation*}
then the traveling wave profile is non-monotone.\\
Let $s<0$ and $P^-<P^+$. If $\tilde{F}_1(\xi)\leq 0$ for $0<\xi<P^-$, then there is a traveling wave profile, connecting $[P^-,0]$ to $[P^+,0]$. If in addition
\begin{equation*}
-\frac{s \mu}{k}<\sqrt{-2 f'(P^-)},
\end{equation*}
then the traveling wave profile is non-monotone.
\end{theorem}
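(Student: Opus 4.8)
The plan is to analyze the planar system \eqref{sys_ODE1}--\eqref{sys2} by phase-plane techniques, locating the rest points and constructing a heteroclinic orbit between them as the graph of a function $Q=Q(P)$. First I would identify the equilibria: since $f_1=Q$ vanishes exactly on the $P$-axis, any rest point has $Q=0$, and then $f_2=0$ forces $f(P)=0$. From \eqref{fun_f}, $f(P)=P^\gamma-(As+B)+A^2/P$, and the Rankine--Hugoniot relations \eqref{Rankine_Hugoniot}--\eqref{Rankine_Hugoniot2} are precisely what guarantees $f(P^-)=f(P^+)=0$; hence $[P^\pm,0]$ are the two candidate endpoints. I would then compute the linearization at each equilibrium: the Jacobian of $(f_1,f_2)$ at $[P_0,0]$ is $\begin{pmatrix}0&1\\ \tfrac{2}{k^2}f'(P_0)& -\tfrac{2\mu s}{k^2}\end{pmatrix}$, whose eigenvalues $\lambda$ satisfy $\lambda^2+\tfrac{2\mu s}{k^2}\lambda-\tfrac{2}{k^2}f'(P_0)=0$. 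For $s>0$ the case $f'(P^-)>0$ makes $[P^-,0]$ a saddle (one positive, one negative eigenvalue), while the sign of $f'(P^+)$ together with the discriminant condition $\tfrac{s\mu}{k}\lessgtr\sqrt{-2f'(P^+)}$ distinguishes a stable node from a stable spiral at $[P^+,0]$ — which is exactly the dichotomy between monotone and non-monotone profiles claimed in the theorem. The monotonicity/non-monotonicity part will then follow once the connection is established: oscillatory (spiral) approach forces $Q$ to change sign, hence $P'$ changes sign, hence the profile is non-monotone.

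The heart of the argument is the existence of the heteroclinic connection from the saddle $[P^-,0]$ to the sink $[P^+,0]$. Here I would convert \eqref{sys_ODE1}--\eqref{sys2} to a single first-order ODE for $Q$ as a function of $P$, valid where $Q\neq0$: dividing \eqref{sys2} by \eqref{sys_ODE1},
\[
Q\frac{dQ}{dP}=\frac{2f(P)}{k^2}-\frac{2\mu s}{k^2}Q+\frac{Q^2}{P}.
\]
Setting $W=\tfrac12 Q^2$ turns this into a linear-in-$W$ equation, $\dfrac{dW}{dP}-\dfrac{2}{P}W=\dfrac{2f(P)}{k^2}-\dfrac{2\mu s}{k^2}\sqrt{2W}$, and the energy-type primitive $F(P)$ defined just before the theorem is the natural Lyapunov-like quantity: one checks $\frac{d}{dy}\big[\tfrac{k^2}{2}Q^2 P^{-?}\,\dots\big]$ reduces, after using $F'(P)=f(P)$, to a monotone expression driven by the $-2\mu s Q^2/P$ dissipation term. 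The auxiliary function $F_1(P)=F(P)-(s\mu P/k)^2-F(P^-)$ encodes exactly the requirement that the unstable manifold leaving $[P^-,0]$ stays in the region $\{Q^2\ge 0\}$ and cannot escape to $P\to0$ or back across $P^-$; the hypothesis $F_1(P)\le0$ on $0<P<P^+$ is the sign condition that traps the orbit in the strip $P^+\le P\le P^-$ and funnels it into the basin of the sink $[P^+,0]$. I would make this precise by a shooting/trapping-region argument: follow the branch of the unstable manifold of $[P^-,0]$ on which $P$ decreases, show via the $F_1\le0$ inequality that $Q^2$ stays nonnegative and the trajectory remains in the bounded box, then invoke the Poincaré--Bendixson alternative (no other equilibria, no periodic orbits because of the strict dissipation $\mu s>0$) to conclude the $\omega$-limit set is $[P^+,0]$.

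The main obstacle I anticipate is the careful handling of the points where $Q=0$ in the interior — i.e., controlling the orbit precisely at the turning points of a non-monotone profile, where the reduction to $Q=Q(P)$ breaks down and one must return to the full planar system. One must rule out that the trajectory touches $Q=0$ at some $P\in(P^+,P^-)$ and reverses in a way that leaves the box, and the condition $F_1(P)\le0$ must be shown to persist as a strict inequality except at the endpoints so that $Q$ vanishes only asymptotically or at isolated turning points. A secondary technical point is the degeneracy of the vector field as $P\to0^+$ (the $Q^2/P$ term), which the hypothesis on the interval $0<P<P^+$ is designed to keep the orbit away from; I would need a barrier argument near $P=0$ to ensure the profile stays positive, consistent with the standing assumption $\rho>0$. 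The statements for $s<0$, $P^-<P^+$ are entirely symmetric: reversing the roles of $P^\pm$ and using $\tilde F_1$ in place of $F_1$, with $[P^+,0]$ now the saddle and $[P^-,0]$ the sink, and the same discriminant condition $-s\mu/k\lessgtr\sqrt{-2f'(P^-)}$ governing the monotone/non-monotone alternative.
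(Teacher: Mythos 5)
First, a caveat: the paper you were given does not itself prove Theorem \ref{lemma_global_existence}; the proof lives in the companion paper \cite{Zhelyazov}, so the comparison below is with the strategy that the statement encodes through the definitions of $F$, $F_1$ and the discriminant conditions — namely a phase-plane Lyapunov/trapping argument. Your skeleton matches that strategy: equilibria $[P^\pm,0]$ from the Rankine--Hugoniot relations, a saddle at $[P^-,0]$ since $f'(P^-)>0$, a stable node versus stable focus at $[P^+,0]$ according to whether $(s\mu/k)^2\gtrless -2f'(P^+)$ (which is exactly the stated non-monotonicity criterion), and convergence of the relevant branch of the unstable manifold to $[P^+,0]$ by an invariance-principle argument. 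That part is correct and is the same route.

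The genuine gap is in the core step, where your account of how the hypothesis $F_1\le0$ enters is wrong. You claim it ``traps the orbit in the strip $P^+\le P\le P^-$''; that cannot be the mechanism, because in the non-monotone case the orbit must leave that strip and spiral around $[P^+,0]$, dipping below $P^+$ — which is precisely why the hypothesis is imposed on $(0,P^+)$ rather than on $(P^+,P^-)$. The real point is that the natural energy $\mathcal{E}=\tfrac{k^2}{4}Q^2-F(P)$ is \emph{not} automatically monotone along orbits: one computes
\begin{equation*}
\frac{d}{dy}\Big(\frac{k^2}{4}Q^2-F(P)\Big)=Q^2\Big(\frac{k^2 Q}{2P}-s\mu\Big),
\end{equation*}
and the cubic term $k^2Q^3/(2P)$, produced by the $Q^2/P$ term in \eqref{sys2} (the same term that makes your $W=Q^2/2$ equation \emph{not} linear, since the damping enters through $Q=\pm\sqrt{2W}$), has no sign once $Q>0$. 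The hypothesis $F_1\le0$ is exactly what closes the resulting bootstrap: while $\mathcal{E}\le-F(P^-)$ one has $\tfrac{k^2}{4}Q^2\le F(P)-F(P^-)\le(s\mu P/k)^2$, hence $|Q|\le 2s\mu P/k^2$, hence $\dot{\mathcal{E}}\le0$, so the sublevel set $\{\mathcal{E}\le-F(P^-)\}$ is forward invariant along this orbit; since $F(P)\to-\infty$ as $P\to0^+$, this same inequality supplies the barrier at vacuum that you deferred as a ``secondary technical point.'' Your proposal never performs this computation (the ``$P^{-?}$'' placeholder sits where it should be) and therefore cannot explain why the hypothesis has the specific form $F(P)-F(P^-)\le(s\mu P/k)^2$. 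A smaller related issue: excluding periodic orbits ``because of the strict dissipation $\mu s>0$'' is not automatic, since the divergence of \eqref{sys_ODE1}--\eqref{sys2} is $-2\mu s/k^2+2Q/P$, which is not sign-definite; that exclusion again has to go through the Lyapunov function above.
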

The condition $s>0$, $P^+<P^-$ is implied by the condition for a 2-shock with a supersonic right state, and $u^+>c_s(P^+)$, where $u^+=J^+/P^+$, and $c_s(P^+)$ is the speed of sound.\\
We describe the result of Theorem \ref{lemma_global_existence} in the particular case of $\gamma = 1$, in Figure \ref{figure_profiles} (a), and $\gamma = \frac{3}{2}$, in Figure \ref{figure_profiles} (b).
The condition of Theorem \ref{lemma_global_existence} can be easily checked numerically for the profiles presented in Figure \ref{figure_profiles}. 
To give a numerical computation for such profile, we analyze the jacobian of \eqref{sys_ODE1}-\eqref{sys2} at the steady-state $[P^-,0]$. 
With the notation $a=\frac{2}{k^2}>0$, $b=\frac{2 s \mu}{k^2}>0$,  the jacobian is given by
\begin{equation*}
     J=  \begin{bmatrix}
    0 & a \\
    f'(P^-) & -b
    \end{bmatrix}.
\end{equation*}
a direct computation shows that his eigenvalues are
\begin{equation*}
\mu_{1,2}(P^-) = (-b \pm \sqrt{b^2+4 a f'(P^\pm)})/2
\end{equation*}
and, since $f'(P^-)>0$, we have $\sqrt{b^2+4 a f'(P^-)}>b$, and we conclude $\mu_1(P^-) < 0 < \mu_2(P^-)$ and therefore  the steady-state $[P^-,0]$ is a saddle. 
The eigenvector corresponding to the positive eigenvalue $\mu_2(P^-)$ with negative second component is
\begin{equation*}
v_2=  -\begin{bmatrix}
    \frac{(b + \sqrt{b^2+4 a f'(P^-)})}{2 f'(P^-)}\\
    1
    \end{bmatrix}
\end{equation*}
and, as customary, it is tangent to the unstable subspace of the equilibrium $[P^-,0]$. 
Then, to numerically compute the traveling wave profile, we numerically integrate \eqref{sys_ODE1}-\eqref{sys2} 
with the initial condition $[P^-,0]+h v_2/|v_2|$, with $h>0$, sufficiently small, for instance $h=10^{-5}$; see Figure \ref{figure_profiles}. The global existence Lemma from \cite{Zhelyazov} applies to the parameters of these profiles. The jacobian at $[P^+,0]$ for the parameters in Figure \ref{figure_profiles} (b) has imaginary eigenvalues, which shows that the profile is non-monotone.\\
Using  $P(y)$, we obtain $J(y)$ from equation \eqref{equation_j}.
\begin{figure}[H]
\centering
\begin{subfigure}[b]{0.45\textwidth}
\includegraphics[scale=0.6]{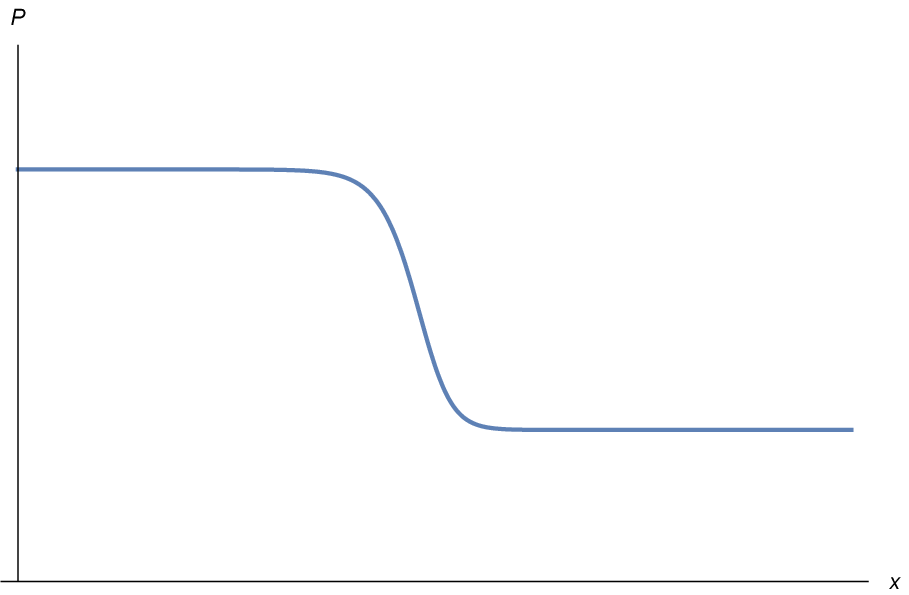}
\caption{}
\label{fig:fig_monotone}
\end{subfigure}%
\begin{subfigure}[b]{0.45\textwidth}
\includegraphics[scale=0.6]{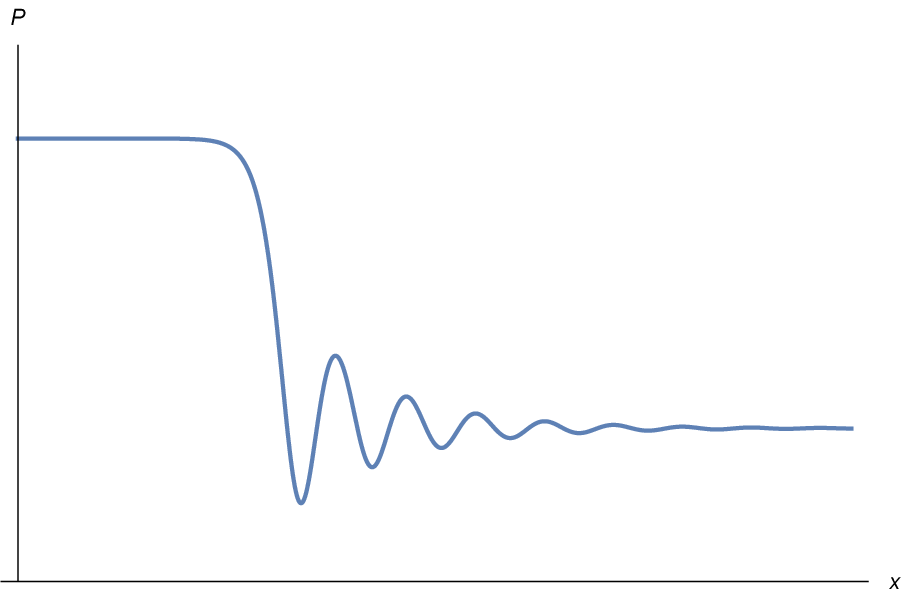}
\caption{}
\label{fig:fig_oscillatory}
\end{subfigure}

\caption{(a) A monotone profile for parameters $P^+=5.69$, $P^-=8.61$, $s=1$, $\gamma=1$, $\mu=8$, $k=1$. (b) An oscillatory profile for parameters $P^+=3.5$, $P^-=4.63$, $s=1$, $\gamma=\frac{3}{2}$, $\mu=0.25$, $k=\sqrt{2}$.}
\label{figure_profiles}
\end{figure}

\section{Linearization}
\label{section_linerazation}
Using the change of variables $\tau=t/\epsilon$, $y=(x-st)/\epsilon$, we get the full linearized operator around the profile for \eqref{eq_sys}:
\begin{equation}
\label{operator_L}
L
\begin{bmatrix}
\tilde{\rho}\\
\tilde{J}
\end{bmatrix}
=\begin{bmatrix}
    s \tilde{\rho}_y - \tilde{J}_y\\
    s \tilde{J}_y +(\frac{J^2}{P^2}\tilde{\rho})_y-(\frac{2 J}{P}\tilde{J})_y-\gamma (P^{\gamma-1}\tilde{\rho})_y+\mu \tilde{J}_{yy}+L_V\tilde{\rho}
    \end{bmatrix},
\end{equation}
where
\begin{equation*}
L_V \tilde{\rho} = \frac{k^2}{2}\tilde{\rho}_{yyy}-2 k^2 \Big{(} (\sqrt{P})_y\Big{(}\frac{\tilde{\rho}}{\sqrt{P}}\Big{)}_y\Big{)}_y,
\end{equation*}
with associated eigenvalue problem given by
\begin{equation}
\label{eq_variable_coeff}
\lambda \begin{bmatrix}
\tilde{\rho}\\
\tilde{J}
\end{bmatrix} = L\begin{bmatrix}
\tilde{\rho}\\
\tilde{J}
\end{bmatrix}.
\end{equation}
For the analysis of the eigenvalue problem \eqref{eq_variable_coeff} we will need the Evans function and to this end it is also important to re--express the above linearized systems in terms of integrated variables, 
because  this transformation removes the zero eigenvalue (always present, being its eigenfunction given by the derivative of the profile), without further modifications of the spectrum; see, for instance \cite{Humpherys}.
To this end, consider
\begin{equation*}
\hat{\rho}(x)=\int_{-\infty}^x \tilde{\rho}(y)dy,\mbox{ }\hat{J}(x)=\int_{-\infty}^x \tilde{J}(y) dy,
\end{equation*} 
Integrating the equation \eqref{eq_variable_coeff} it follows that for $\lambda \neq 0$ the integrated variables $\tilde{\rho}$ and $\tilde{J}$ decay exponentially as $|x|\rightarrow +\infty$. Expressing $\tilde{\rho}$ and $\tilde{J}$ in terms of $\hat{\rho}$ and $\hat{J}$, and integrating \eqref{eq_variable_coeff} from $-\infty$ to $x$ we get the system in integrated variables:
\begin{align}
\label{sys_integrated_variables}
\lambda \hat{\rho} &= s \hat{\rho}'-\hat{J}',\\
\label{sys_integrated_variables1}
\lambda \hat{J} &= f_1 \hat{\rho}' + f_2 \hat{J}'+\mu \hat{J}''+\frac{k^2}{2}\hat{\rho}'''-2 k^2 (\sqrt{P})'\Big{(}\frac{\hat{\rho}'}{\sqrt{P}}\Big{)}',
\end{align}
with
\begin{align*}
f_1(x) &= \frac{J(x)^2}{P(x)^2}-\gamma P(x)^{\gamma-1},\\
f_2(x) &= s - 2 \frac{J(x)}{P(x)}.
\end{align*}
We can rewrite \eqref{sys_integrated_variables}-\eqref{sys_integrated_variables1} as $V'=\hat{M}(x,\lambda)V$, where $V=[\hat{\rho},\hat{J},u_1,u_2]^T$ and 
\begin{equation}
\label{mat_M1}
\hat M(x,\lambda) = \begin{bmatrix}
0 & 0 & 1 & 0\\
-\lambda & 0 & s & 0\\
0 & 0 & 0 & 1\\
\frac{2 \lambda f_2}{k^2} & \frac{2 \lambda}{k^2} & \frac{2 \lambda \mu}{k^2}-\frac{2 f_1}{k^2}-\frac{2 s f_2}{k^2}-\frac{(P')^2}{P^2}& \frac{2 P'}{P}-\frac{2 s \mu}{k^2}
\end{bmatrix}.
\end{equation}
The limit of $\hat{M}(x,\lambda)$ as $x \rightarrow \pm \infty$ is given by
\begin{equation}
\label{mat_M}
M^{\pm} = \begin{bmatrix}
0 & 0 & 1 & 0\\
-\lambda & 0 & s & 0\\
0 & 0 & 0 & 1\\
\frac{2 \beta^\pm \lambda}{k^2} & \frac{2 \lambda}{k^2} & \frac{2}{k^2}(\mu \lambda - s \beta^\pm - \alpha^\pm) & -\frac{2 s \mu}{k^2}
\end{bmatrix},
\end{equation}
which is the same asymptotics one shall obtain from the original linearization \eqref{operator_L} after rewriting the latter as a first order $4\times4$ system. This will lead to a 
 linear, constant coefficient operator which is crucial in the spectral analysis of the linerazed operator around the profile.
 
A related constant coefficient linear operator is clearly obtained linearizing our original system about a constant state, thus obtaining the same operator, but for $s=0$.
 Denote
\begin{equation}\label{eq:constalbeta}
\alpha=\frac{\bar J^2}{\bar P^2}-\gamma \bar P^{\gamma-1}\mbox{ , }\beta= -\frac{2 \bar J}{\bar P}.
\end{equation}
Then the operator, corresponding to the linearization around the constant steady--state $(\bar P, \bar J)$ is
\begin{eqnarray*}
L_c
\begin{bmatrix}
\tilde{\rho}\\
\tilde{J}
\end{bmatrix}
=\begin{bmatrix}
     - \tilde{J}'\\
    \alpha \tilde{\rho}'+\beta \tilde{J}'+\mu \tilde{J}''+\frac{k^2}{2}\tilde{\rho}'''
    \end{bmatrix},
\end{eqnarray*}
where $'=d/dy$.
The asymptotic operators  at $\pm\infty$ for \eqref{operator_L}  are given by
\begin{eqnarray*}
L_{\pm\infty}
\begin{bmatrix}
\tilde{\rho}\\
\tilde{J}
\end{bmatrix}
=\begin{bmatrix}
    s \tilde{\rho}' - \tilde{J}'\\
    \alpha^\pm \tilde{\rho}'+\beta^\pm \tilde{J}'+\mu \tilde{J}''+\frac{k^2}{2}\tilde{\rho}'''
    \end{bmatrix}
\end{eqnarray*}
where 
\begin{equation*}
\alpha^{\pm}=\frac{(J^{\pm})^2}{(P^{\pm})^2}-\gamma (P^\pm)^{\gamma-1}\mbox{ , }\beta^\pm=s-\frac{2 (J^\pm)}{P^\pm}.
\end{equation*}
We may rewrite the equation
\begin{equation*}
\lambda \begin{bmatrix}
\tilde{\rho}\\
\tilde{J}
\end{bmatrix} = L_{\pm\infty}\begin{bmatrix}
\tilde{\rho}\\
\tilde{J}
\end{bmatrix}
\end{equation*}
as a first order system $V'=M^{\pm}V$, with $V=[\tilde{\rho},\tilde{J},u_1,u_2]^T$, and the matrices $M^{\pm}$ are exactly \eqref{mat_M}.

\section{Spectral analysis}\label{section_point_spectrum} 
In this section we shall locate the spectrum of our linearized operators,  starting from the constant steady--case. Indeed, there are no eigenfunctions   which decay at $\pm\infty$ for the constant coefficient linear operator $L_c$, thus the spectrum of the latter reduces to the essential one.  In addition, we shall perform a resolvent estimate in that case  valid in the whole unstable half plane $\Re(\lambda)>0$, to present  in particular a simple calculation which is useful to understand the behavior of the spectrum for the linearized operator about the profile $L$; more details in this direction are presented in \cite{Zhelyazov}.

\subsection{Constant steady state}
To obtain a resolvent estimate in $H^1$, we need to invert in that space  the relation
 \begin{equation*}
(\lambda I - L_c) \begin{bmatrix}
\tilde{\rho}\\
\tilde{J}
\end{bmatrix} = \begin{bmatrix}
f_1\\
f_2
\end{bmatrix}
\end{equation*}
and prove $(\lambda I - L_c)^{-1}: H^1 \to H^1$ is bounded for $\Re(\lambda)>0$. 
With the notation 
\eqref{eq:constalbeta}, the system at hand rewrites
\begin{align}
\lambda \tilde{\rho}&=- \tilde{J}'+f_1,\label{nonh_eq1}\\
\lambda \tilde{J}&=\alpha \tilde{\rho}'+\beta \tilde{J}'+\mu \tilde{J}''+\frac{k^2}{2}\tilde{\rho}'''+f_2 \label{nonh_eq2}
\end{align}
and denote $f=[f_1,f_2]^T$.
%

The speed of sound is $c_s(\bar P)=\sqrt{\gamma \bar P^{\gamma-1}}$ and the flow velocity is $\bar u=\bar J/\bar P$. The condition for a subsonic steady-state is $|\bar u|<c_s(\bar P)$, which becomes
$\alpha<0$ after squaring. As an example, we present here below a resolvent estimate  in the above framework in $H^1$.
\begin{proposition}
If the steady-state is subsonic, then for $\Re(\lambda)>0$ we have the resolvent estimate
\begin{equation*}
\Vert [\tilde{\rho},\tilde{J}]^T\Vert_{H^1}\leq C \frac{ \Vert f \Vert_{H^1}}{h(\Re(\lambda))},
\end{equation*}
where $C=\max\{\frac{k^2}{2},-\alpha,1\}$ and $h(\Re(\lambda)):=\min\{-\alpha \Re(\lambda),\Re(\lambda),\frac{k^2}{2}\Re(\lambda),\mu\}$.
\end{proposition}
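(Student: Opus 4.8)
\emph{Proof strategy.} The plan is to establish the bound as an a priori energy estimate for the resolvent system \eqref{nonh_eq1}--\eqref{nonh_eq2}, i.e.\ for any sufficiently regular, decaying solution $[\tilde{\rho},\tilde{J}]^T$ of that system with $\Re(\lambda)>0$. That such a solution exists (so that $(\lambda I-L_c)^{-1}$ is well defined on $H^1$) is, since $L_c$ has constant coefficients, a consequence of the Fourier representation in $x$ together with the estimate itself, and I would only sketch it. All the manipulations below involve integrations by parts on $\mathbb{R}$ whose boundary contributions vanish by decay; to be safe one performs them first for $f$ in a dense subclass of $H^1$ and then passes to the limit.

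First I would take the $L^2(\mathbb{R})$ inner product of \eqref{nonh_eq1} with $-\alpha\tilde{\rho}$ — an admissible, \emph{positive} multiplier precisely because the steady state is subsonic, $-\alpha>0$ — and of \eqref{nonh_eq2} with $\tilde{J}$, add the two identities and take real parts. Writing $\langle u,v\rangle=\int_{\mathbb{R}}u\bar v\,dx$, the mixed terms $\alpha\langle\tilde{J}',\tilde{\rho}\rangle+\alpha\langle\tilde{\rho}',\tilde{J}\rangle$ and the convective term $\beta\langle\tilde{J}',\tilde{J}\rangle$ are purely imaginary (one integration by parts each, using $\alpha,\beta\in\mathbb{R}$) and hence drop out, while the viscous term gives $\mu\langle\tilde{J}'',\tilde{J}\rangle=-\mu\|\tilde{J}'\|^2$. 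The one genuinely delicate term is the dispersive one: writing $\tfrac{k^2}{2}\langle\tilde{\rho}''',\tilde{J}\rangle=-\tfrac{k^2}{2}\langle\tilde{\rho}'',\tilde{J}'\rangle$ and then \emph{substituting $\tilde{J}'=f_1-\lambda\tilde{\rho}$ from the continuity equation \eqref{nonh_eq1}} turns it into $-\tfrac{k^2}{2}\langle\tilde{\rho}'',f_1\rangle-\tfrac{k^2}{2}\bar\lambda\|\tilde{\rho}'\|^2$, whose real part, moved to the left-hand side, produces the coercive term $+\tfrac{k^2}{2}\Re(\lambda)\|\tilde{\rho}'\|^2$. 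Collecting terms, and integrating $\langle\tilde{\rho}'',f_1\rangle$ by parts once, one arrives at the identity
\[
\Re(\lambda)\Big(-\alpha\|\tilde{\rho}\|^2+\|\tilde{J}\|^2+\tfrac{k^2}{2}\|\tilde{\rho}'\|^2\Big)+\mu\|\tilde{J}'\|^2=\Re\Big(-\alpha\langle f_1,\tilde{\rho}\rangle+\langle f_2,\tilde{J}\rangle+\tfrac{k^2}{2}\langle\tilde{\rho}',f_1'\rangle\Big).
\]

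To conclude, I would bound the left-hand side from below by $h(\Re(\lambda))\,\Vert[\tilde{\rho},\tilde{J}]^T\Vert_{H^1}^2$, since $h(\Re(\lambda))=\min\{-\alpha\Re(\lambda),\Re(\lambda),\tfrac{k^2}{2}\Re(\lambda),\mu\}$ is exactly the minimum of the four nonnegative coefficients of $\|\tilde{\rho}\|^2,\|\tilde{J}\|^2,\|\tilde{\rho}'\|^2,\|\tilde{J}'\|^2$. The right-hand side is bounded, by Cauchy--Schwarz term by term and then the discrete Cauchy--Schwarz inequality applied to $(\|\tilde{\rho}\|,\|\tilde{J}\|,\|\tilde{\rho}'\|)$ and $(\|f_1\|,\|f_2\|,\|f_1'\|)$, by $C\,\Vert f\Vert_{H^1}\,\Vert[\tilde{\rho},\tilde{J}]^T\Vert_{H^1}$ with $C=\max\{-\alpha,1,\tfrac{k^2}{2}\}$, using $\|f_1\|^2+\|f_2\|^2+\|f_1'\|^2\le\Vert f\Vert_{H^1}^2$ and $\|\tilde{\rho}\|^2+\|\tilde{J}\|^2+\|\tilde{\rho}'\|^2\le\Vert[\tilde{\rho},\tilde{J}]^T\Vert_{H^1}^2$. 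Dividing by $h(\Re(\lambda))\,\Vert[\tilde{\rho},\tilde{J}]^T\Vert_{H^1}$ gives the stated estimate.

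The only step that is not routine bookkeeping is the treatment of the third-order term: taken alone it is not sign-definite and would wreck a naive energy argument, and the point is that eliminating $\tilde{J}'$ through \eqref{nonh_eq1} makes its real part not merely harmless but actually coercive in $\tilde{\rho}'$, which is what upgrades the $L^2$ control of $\tilde{\rho}$ to $H^1$ control at no extra cost. Apart from that, the minor technical points are the justification of the integrations by parts (handled by density or on the Fourier side) and the two uses of subsonicity $-\alpha>0$: once to legitimize the multiplier $-\alpha\tilde{\rho}$ for \eqref{nonh_eq1}, and once to keep $-\alpha\Re(\lambda)$ among the positive coefficients entering $h(\Re(\lambda))$.
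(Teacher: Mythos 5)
Your proposal is correct and follows essentially the same route as the paper's proof: the weighted multiplier $-\alpha\overline{\tilde{\rho}}$ and $\overline{\tilde{J}}$, the vanishing of the antisymmetric mixed and convective terms, and crucially the substitution of $\tilde{J}'=f_1-\lambda\tilde{\rho}$ into the once-integrated dispersive term to extract the coercive $\tfrac{k^2}{2}\Re(\lambda)\|\tilde{\rho}'\|^2$, arriving at the same identity \eqref{eq_const6} and the same final bounds. The only difference is cosmetic (complex inner-product notation versus the paper's explicit real/imaginary decomposition).
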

\begin{proof}
 We multiply equation \eqref{nonh_eq1} by $-\alpha \overline{\tilde{\rho}}$ and equation \eqref{nonh_eq2} by $\overline{\tilde{J}}$. After taking the real part we get
\begin{align*}
\Re(\lambda)(-\alpha |\tilde{\rho}|^2+|\tilde{J}|^2)&=\alpha \Re(\tilde{J}'\overline{\tilde{\rho}}+\tilde{\rho}' \overline{\tilde{J}})+\beta \Re(\tilde{J}'\overline{\tilde{J}})
+\mu \Re({\tilde{J}''\overline{\tilde{J}}})\\
&+\frac{k^2}{2}\Re(\tilde{\rho}'''\overline{\tilde{J}})+\Re(-\alpha f_1 \overline{\tilde{\rho}}+f_2\overline{\tilde{J}}).
\end{align*}
Let $\tilde{\rho}=\tilde{\rho_r}+i \tilde{\rho_i}$ and $\tilde{J}=\tilde{J_r}+i \tilde{J_i}$. We have
\begin{align*}
\alpha \Re(\tilde{J}'\overline{\tilde{\rho}}+\tilde{\rho}' \overline{\tilde{J}})&= \alpha (\tilde{\rho_r} \tilde{J_r} + \tilde{\rho_i} \tilde{J_i})',\\
\beta \Re(\tilde{J}'\overline{\tilde{J}}) &= \frac{\beta}{2}(|\tilde{J}|^2)',\\
\mu \Re({\tilde{J}''\overline{\tilde{J}}}) &= \mu(\tilde{J_r}''\tilde{J_r}+\tilde{J_i}''\tilde{J_i}).
\end{align*}
Therefore
\begin{align}
\Re(\lambda)(-\alpha |\tilde{\rho}|^2+|\tilde{J}|^2)&=\Big{(}\alpha (\tilde{\rho_r} \tilde{J_r} + \tilde{\rho_i} \tilde{J_i})+\frac{\beta}{2}|\tilde{J}|^2\Big{)}'\nonumber\\
&+\mu(\tilde{J_r}''\tilde{J_r}+\tilde{J_i}''\tilde{J_i})+\frac{k^2}{2}\Re(\tilde{\rho}'''\overline{\tilde{J}})+\Re(-\alpha f_1 \overline{\tilde{\rho}}+f_2\overline{\tilde{J}})\label{eq_const2}.
\end{align}
After integration, the derivative term on the right hand side of \eqref{eq_const2} disappears and we get
\begin{align}
\Re(\lambda)(-\alpha\int |\tilde{\rho}|^2 dy +\int |\tilde{J}|^2dy)&=\int \mu(\tilde{J_r}''\tilde{J_r}+\tilde{J_i}''\tilde{J_i})dy +\int \frac{k^2}{2}\Re(\tilde{\rho}'''\overline{\tilde{J}}) dy\nonumber\\
&
\label{eq_const3}+\int \Re(-\alpha f_1 \overline{\tilde{\rho}}+f_2\overline{\tilde{J}}) dy.
\end{align}
By integration by parts we obtain 
\begin{equation}
\label{eq_const4}
\int \mu(\tilde{J_r}''\tilde{J_r}+\tilde{J_i}''\tilde{J_i})dy = - \mu \int ((\tilde{J_r}')^2+(\tilde{J_i}')^2)dy=-\mu \int |\tilde{J}'|^2 dy.
\end{equation}
We have 
\begin{equation*}
\Re(\tilde{\rho}'''\overline{J})=\tilde{\rho_r}'''\tilde{J_r}+\tilde{\rho_i}'''\tilde{J_i},
\end{equation*}
hence by integration by parts
\begin{equation*}
\int \frac{k^2}{2}\Re(\tilde{\rho}'''\overline{\tilde{J}}) dy=-\frac{k^2}{2}\int (\tilde{J_r}' \tilde{\rho_r}''+\tilde{J_i}' \tilde{\rho_i}'')dy.
\end{equation*}
Let $f_1=f_{1,r}+i f_{1,i}$.
Substituting $\tilde{J_r}'$ and $\tilde{J_i}'$ from equation  \eqref{nonh_eq1} yields
\begin{align*}
-\frac{k^2}{2}\int (\tilde{J_r}' \tilde{\rho_r}''+\tilde{J_i}' \tilde{\rho_i}'')dy = \\
-\frac{k^2}{2}\int \Big{(} (f_{1,r}-\Re (\lambda) \tilde{\rho_r}+\Im(\lambda) \tilde{\rho_i})\tilde{\rho_r}''+(f_{1,i}-\Re (\lambda) \tilde{\rho_i}-\Im(\lambda) \tilde{\rho_r})\tilde{\rho_i}''\Big{)}dy\\
=-\frac{k^2}{2}\int (t_1+t_2+t_3)dy,
\end{align*}
where
\begin{align*}
t_1 &= \Im(\lambda) (\tilde{\rho_i} \tilde{\rho_r}''-\tilde{\rho_i}'' \tilde{\rho_r}),\\
t_2 &= -\Re(\lambda) (\tilde{\rho_r}\tilde{\rho_r}''+\tilde{\rho_i}\tilde{\rho_i}''),\\
t_3 & = f_{1,r} \tilde{\rho_r}''+f_{1,i} \tilde{\rho_i}''.
\end{align*}
Using integration by parts we get
\begin{equation*}
-\frac{k^2}{2}\int t_1 dy = -\frac{k^2 \Im(\lambda)}{2}\int (-\tilde{\rho_r}'\tilde{\rho_i}'+\tilde{\rho_i}'\tilde{\rho_r}')dy=0,
\end{equation*}
and again by integration by parts
\begin{equation*}
-\frac{k^2}{2}\int t_2 dy = -\frac{\Re(\lambda) k^2}{2}\int \Big{(} (\tilde{\rho_r}')^2+(\tilde{\rho_i}')^2\Big{)} dy=-\frac{\Re(\lambda) k^2}{2}\int |\tilde{\rho}'|^2 dy,
\end{equation*}
which yields 
\begin{equation}
\label{eq_const5}
\int \frac{k^2}{2}\Re(\tilde{\rho}'''\overline{\tilde{J}}) dy = -\frac{\Re(\lambda) k^2}{2}\int |\tilde{\rho}'|^2 dy-\frac{k^2}{2}\int t_3 dy.
\end{equation}
Substituting \eqref{eq_const4} and \eqref{eq_const5} into \eqref{eq_const3} we get
\begin{align}
&\Re(\lambda)\Big{(}-\alpha\int |\tilde{\rho}|^2 dy +\int |\tilde{J}|^2dy+\frac{ k^2}{2}\int |\tilde{\rho}'|^2 dy\Big{)}+\mu \int |\tilde{J}'|^2 dy \nonumber\\
\label{eq_const6}
&=\int \Re(-\alpha f_1 \overline{\tilde{\rho}}+f_2\overline{\tilde{J}}) dy-\frac{k^2}{2}\int t_3 dy.
\end{align}
Using integration by parts we get
\begin{equation}
\label{eq_intt3}
-\frac{k^2}{2}\int t_3 dy=-\frac{k^2}{2}\int (f_{1,r} \tilde{\rho_r}''+f_{1,i} \tilde{\rho_i}'') dy
=\frac{k^2}{2}\int (f_{1,r}'\tilde{\rho_r}'+f_{1,i}'\tilde{\rho_i}')dy.
\end{equation}
Equation \eqref{eq_intt3} yields
\begin{align}
&-\frac{k^2}{2}\int t_3 dy \leq \frac{k^2}{2} \int \sqrt{(f_{1,r}')^2+(f_{1,i}')^2}\sqrt{(\tilde{\rho_r}')^2+(\tilde{\rho_i}')^2}dy\nonumber\\
\label{ineq_intt3}
&=\frac{k^2}{2}\int|f_1'||\tilde{\rho}'|dy.
\end{align}
With the notation $h(\Re(\lambda)):=\min\{-\alpha \Re(\lambda),\Re(\lambda),\frac{k^2}{2}\Re(\lambda),\mu\}$, we estimate the left-hand side of \eqref{eq_const6} by
\begin{align}
&h(\Re(\lambda))\Big (\int |\tilde{\rho}|^2dy+\int|\tilde{J}|^2dy+\int |\tilde{\rho}'|^2 dy+\int |\tilde{J}'|^2 dy \Big ) \nonumber\\
\label{estimate_lhs}
&\leq \Re(\lambda)\Big{(}-\alpha\int |\tilde{\rho}|^2 dy +\int |\tilde{J}|^2dy+\frac{ k^2}{2}\int |\tilde{\rho}'|^2 dy\Big{)}+\mu \int |\tilde{J}'|^2 dy.
\end{align}
Let $C=\max\{\frac{k^2}{2},-\alpha,1\}$. Taking into account \eqref{ineq_intt3} we estimate the right-hand side of \eqref{eq_const6} by 
\begin{align}
&\int \Re(-\alpha f_1 \overline{\tilde{\rho}}+f_2\overline{\tilde{J}}) dy-\frac{k^2}{2}\int t_3 dy\nonumber\\
&\leq \frac{k^2}{2}\int|f_1'||\tilde{\rho}'|dy + \int (|\alpha| |f_1| |\tilde{\rho}|+|f_2| |\tilde{J}|) dy \nonumber\\
&\leq C(\int|f_1'||\tilde{\rho}'|dy+\int |f_1| |\tilde{\rho}|dy+\int |f_2| |\tilde{J}|dy) \nonumber\\
&\leq C(\int|f_1'||\tilde{\rho}'|dy+\int |f_1| |\tilde{\rho}|dy+\int |f_2| |\tilde{J}|dy+ \int |f_2'| |\tilde{J'}|dy) \nonumber\\
& \label{estimate_rhs} \leq C \Vert f \Vert_{H^1} \Vert [\tilde{\rho},\tilde{J}]^T\Vert_{H^1}.
\end{align}
From inequalities \eqref{estimate_lhs}  and \eqref{estimate_rhs} we get
\begin{equation*}
h(\Re(\lambda))\Vert [\tilde{\rho},\tilde{J}]^T\Vert_{H^1}^2\leq C \Vert f \Vert_{H^1} \Vert [\tilde{\rho},\tilde{J}]^T\Vert_{H^1}.
\end{equation*}
This yields the proof of the proposition.
\end{proof}

For completeness, we underline that the stability of the essential spectrum can be obtained  by Lemma \ref{lem:essspec} below, which is a restatement for the present case involving $L_c$ of the corresponding result in  \cite{Zhelyazov} for the asymptotic operators $L_{\pm\infty}$. The statement refers to the asymptotic operators at 
$x\to\pm\infty$, as it is used to locate the essential spectrum also for the linearized operator around the profile \cite{Kapitula},
 but clearly can be applied to any linearized operator about a constant state.
To this end, we report the dispersion relation referred to $L_c$ here below:
\begin{equation*}
\lambda^2+\xi(\mu \xi - i \beta)\lambda+\xi^2\Big{(}-\alpha+\frac{k^2 \xi^2}{2}\Big{)}=0.
\end{equation*}
\begin{lemma}\label{lem:essspec}
If $\alpha \leq 0$, then the essential spectrum is in the closed left half-plane. Also, if $\xi \neq 0$, then $\Re(\lambda_{1,2})<0$.
\end{lemma}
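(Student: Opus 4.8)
The plan is to analyze the quadratic dispersion relation
\begin{equation*}
\lambda^2+\xi(\mu \xi - i \beta)\lambda+\xi^2\Big(-\alpha+\frac{k^2 \xi^2}{2}\Big)=0
\end{equation*}
directly, treating it as a polynomial in $\lambda$ with coefficients depending on the real Fourier parameter $\xi$. The essential spectrum of $L_c$ is precisely the set of roots $\lambda(\xi)$ as $\xi$ ranges over $\mathbb{R}$, so the claim reduces to showing that for every real $\xi$ both roots satisfy $\Re(\lambda)\le 0$, with strict inequality when $\xi\neq 0$. For $\xi=0$ the equation becomes $\lambda^2=0$, so $\lambda=0$ is the only root and it lies on the imaginary axis, consistent with the statement. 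Thus I would fix $\xi\neq 0$ from here on.

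For $\xi\neq 0$ I would apply the Routh--Hurwitz criterion for a quadratic with complex coefficients, or equivalently argue by hand. Write the equation as $\lambda^2 + p\lambda + q = 0$ with $p = \xi(\mu\xi - i\beta)$ and $q = \xi^2(-\alpha + \tfrac{k^2\xi^2}{2})$. Note $\Re(p) = \mu\xi^2 > 0$ and, since $\alpha\le 0$, $q$ is real and $q = \xi^2(-\alpha) + \tfrac{k^2\xi^4}{2} > 0$ (strictly, because $\tfrac{k^2\xi^4}{2}>0$). The cleanest route is: if $\lambda_1,\lambda_2$ are the two roots then $\lambda_1+\lambda_2 = -p$ and $\lambda_1\lambda_2 = q$. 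Suppose for contradiction that some root has $\Re(\lambda)\ge 0$. I would set $\lambda = i\omega - \delta$ on the boundary case and first rule out purely imaginary roots: substituting $\lambda = i\omega$, $\omega\in\mathbb{R}$, into the equation and separating real and imaginary parts gives $-\omega^2 + \mu\xi^2\cdot 0 + \beta\xi\omega\cdot(\text{sign})\dots$; more precisely the real part yields $-\omega^2 + \beta\xi\omega + q = 0$ and the imaginary part yields $\mu\xi^2\omega = 0$, forcing $\omega = 0$ (as $\mu\xi^2\neq 0$), but then $q=0$, contradicting $q>0$. Hence there is no root on the imaginary axis.

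Having excluded imaginary roots, I would conclude by a continuity/degree argument. Both roots depend continuously on parameters; scaling $q$ and $\Im(p)$ continuously to zero (keeping $\Re(p) = \mu\xi^2 > 0$ fixed) deforms the equation to $\lambda^2 + \mu\xi^2\lambda = 0$, whose roots are $0$ and $-\mu\xi^2$, i.e.\ one root in the open left half-plane and one on the boundary; but along the deformation no root ever crosses the imaginary axis once $q>0$ and $\Im(p)$ are switched on (the no-imaginary-root computation above works for the whole family with $q>0$), and at the endpoint the double-check shows both roots are in the open left half-plane. Alternatively, and more self-contained, I would just invoke that for $\lambda^2 + p\lambda + q$ with $\Re p>0$ and $q$ real positive, the product of roots is $q>0$ so the roots are either both in the open right half-plane, both in the open left half-plane, or a conjugate-like pair straddling; since their sum is $-p$ with $\Re(-p)=-\mu\xi^2<0$, they cannot both be in the right half-plane, and since neither is imaginary, not both can be split by the axis in a way giving one with positive real part — one quickly checks $\Re(\lambda_1)\Re(\lambda_2)\le 0$ would force, together with $\Re(\lambda_1)+\Re(\lambda_2)<0$ and $\mathrm{Im}$-part bookkeeping from $\lambda_1\lambda_2$ real, a sign contradiction. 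Either way $\Re(\lambda_{1,2})<0$.

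The main obstacle is the last step: a complex-coefficient quadratic does not obey the simplest real Routh--Hurwitz table, so one must be slightly careful ruling out the ``one root in each half-plane'' configuration. I expect the cleanest write-up is the homotopy argument — fix $\Re(p)=\mu\xi^2$, continuously turn on $\Im(p)$ and $q$, note by the explicit imaginary-axis computation that no root touches $i\mathbb{R}$ throughout (because $q>0$ persists and $\mu\xi^2\neq 0$), and evaluate at a convenient endpoint — so that the sign of $\Re(\lambda_{1,2})$ is pinned down without a messy case analysis. The $\alpha\le 0$ hypothesis enters exactly to guarantee $q>0$; with $\alpha>0$ one could have $q<0$ for small $\xi$ and a genuine unstable root would appear, which is why the hypothesis is sharp.
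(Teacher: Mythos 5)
The paper does not actually prove this lemma: it states it as a restatement of a result from the companion paper \cite{Zhelyazov}, so there is no in-house argument to compare against. Your self-contained proof of the stated dispersion-relation claim is essentially correct: writing the relation as $\lambda^2+p\lambda+q=0$ with $\Re(p)=\mu\xi^2>0$ and $q=-\alpha\xi^2+\tfrac{k^2\xi^4}{2}>0$ for $\xi\neq0$, the exclusion of purely imaginary roots (imaginary part forces $\omega=0$, then $q=0$) is exactly the right key step, and the hypothesis $\alpha\le0$ enters precisely to make $q>0$. One caveat: as literally written, your homotopy sends $q$ all the way to $0$, and at that endpoint the limiting polynomial $\lambda^2+\mu\xi^2\lambda$ has a root \emph{on} the imaginary axis, which contradicts your claim that ``at the endpoint both roots are in the open left half-plane'' and momentarily breaks the no-crossing argument; the fix is trivial (stop the deformation at some fixed $q_0>0$, where the real Routh--Hurwitz test applies). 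Alternatively, the direct argument you sketch at the end closes cleanly without any homotopy: writing $\lambda_j=a_j+ib_j$, the conditions $\lambda_1\lambda_2=q>0$ real give $a_1b_2+a_2b_1=0$ and $a_1a_2=q+b_1b_2$; if $a_1>0>a_2$ then $b_2=-(a_2/a_1)b_1$ makes $b_1b_2\ge0$, whence $a_1a_2\ge q>0$, a contradiction, so both roots lie in the same open half-plane, and $a_1+a_2=-\mu\xi^2<0$ places them on the left. I would recommend writing up that version rather than the deformation.
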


In contrast with the situation described in the proposition above, the localization of the point spectrum of the linearized operator along a profile is more involved, and in particular the resolvent estimate obtained above is proved in the companion paper \cite{Zhelyazov} only for $\Re(\lambda)$ sufficiently big.
Thus, to locate the point spectrum in that case, an efficient method is to locate   the zeros of the Evans function,  the latter being exactly the eigenvalues of the operator under consideration. The argument needed requires a careful analysis of the behavior of such function  for large $|\lambda|$, which gives a quantitative version of  the asymptotic results of \cite{Sandstede}, excluding the presence of eigenvalues for $|\lambda|> C$ with an \emph{explicit} bound for the constant.   This result is completed with   the numerical study of the winding number of the Evans function in a sufficiently big contour; all details can be found in \cite{Zhelyazov}. In the next section we shall briefly recall the main ingredients/results for the latter numerical analysis, thus yielding to a numerical evidence of point spectrum stability.

\subsection{The Evans function and its numerical evaluation}
\label{section_the_evans_function}
To define  the Evans function, let us consider the equation $Y'=\hat{M}(y,\lambda)Y$, where $\hat{M}(y,\lambda)$ is defined in \eqref{mat_M1}.
As it is manifest, its limits at $\pm\infty$ are given by  the matrices $M^{\pm}$, defined by \eqref{mat_M}, corresponding to limit states $P^{\pm}$, and we assume  these matrices are hyperbolic. This is always true if we are to the right of the bound for the essential spectrum. In addition, we assume that $M^{-}$ has $k$ unstable eigenvalues $\nu^{-}_1,\dots,\nu^{-}_k$ (i.e.\ $\Re(\nu^{-}_i)>0$), and $M^{+}$ has $n-k$ stable eigenvalues $\nu^{+}_1,...,\nu^{+}_{n-k}$ (i.e.\ $\Re(\nu^{+}_i)<0$), and denote the corresponding (normalized) eigenvectors by $v^{\pm}_i$. In our case $n=4$ and $k=2$. Let $Y^{-}_i$ be a solution of $Y'=M(y,\lambda)Y$, satisfying $exp(\nu^- y)Y^{-}(y)$ tends to $v^{-}_i$ as $y \rightarrow -\infty$ and $exp(\nu^+ y)Y^{+}(y)$ tends to $v^{+}_i$ as $y \rightarrow +\infty$. Then,  the Evans function can be defined by
\begin{equation*}
E(\lambda) = det(Y^-_1(0), .., Y^-_k(0),  Y^+_1(0), ... ,Y^+_{n-k}(0)).
\end{equation*}
As a consequence, a point $\lambda \in \mathbb{C}$ is in the point spectrum of $L$ if and only if $E(\lambda)=0$.
In contrast, to compute the Evans function numerically, we use the compound matrix method; for instance, see \cite{Humpherys} .
This method  is used in order to get a stable numerical procedure, because the system $Y'=M(y,\lambda)Y$ is numerically stiff.
Specifically, the compound matrix $B(y,\lambda)$ is given by:\
\begin{equation*}
B=\begin{bmatrix}
    m_{11}+m_{22} & m_{23} & m_{24} & -m_{13} & -m_{14} & 0 \\
    m_{32} & m_{11}+m_{33} & m_{34} & m_{12} & 0 & -m_{14} \\
    m_{42} & m_{43} & m_{11}+m_{44} & 0 & m_{12} & m_{13} \\
    -m_{31} & m_{21} & 0 & m_{22}+m_{33} & m_{34} & -m_{24} \\
    -m_{41} & 0 & m_{21} & m_{43} & m_{22}+m_{44} & m_{23} \\
     0 & -m_{41} & m_{31} & -m_{42} & m_{32} & m_{33}+m_{44}
  \end{bmatrix}.
\end{equation*}
We integrate the equation $\phi'=(B(y,\lambda)-\mu^-)\phi$ numerically  on a sufficiently large interval $[-L_1,0]$, where $\mu^-$ is the unstable eigenvalue of $B$ at $-\infty$ with maximal (positive) real part. Given a profile $[P(y),J(y)]$ at discrete points $(y_k)_{k=1}^N$, we obtain the matrix $B(y,\lambda)$, by linear interpolation.
Similarly we integrate the equation $\phi'=(B(y,\lambda)-\mu^+)\phi$ on $[0,L_1]$ backwards, where this time  $\mu^+$ is the stable eigenvalue of $B$ at $+\infty$ with minimal (negative) real part. Then, the coefficients $\mu^{\pm}$ compensate for the growth/decay at infinity. Finally, the Evans function can be constructed by means of linear combination of the components of the two solutions $\phi^\pm = (\phi^\pm_1, \dots, \phi^\pm_6)$ as follows:
\begin{equation*}
E(\lambda)=\phi^-_1\phi^+_6-\phi^-_2\phi^+_5+\phi^-_3\phi^+_4+\phi^-_4\phi^+_3-\phi^-_5\phi^+_2+\phi^-_6\phi^+_1\Big |_{y=0}.
\end{equation*}
Specifically, for our calculations we choose $L_1=40$.

Under the assumption that $E(\lambda)$ is analytic in the region surrounded by a closed contour $\Gamma$, and it does not vanish on the contour, we can use the winding number
\begin{equation}
\label{log_idnicator1}
\frac{1}{2 \pi i} \int_{\Gamma}\frac{E'(z)}{E(z)}dz
\end{equation}
to count the number of zeros inside the contour.  The remaining part of this paper is devoted to provide numerical evidence that the integral \eqref{log_idnicator1} is indeed zero in a sufficiently big contur $\Gamma$ lying in the unstable half plane.
We present the calculations on Section \ref{Numerics_E} in integrated variables.

\subsection{Numerical evaluation of $E(\lambda)$ for integrated variables}
\label{Numerics_E}
In this subsection we compute the Evans function for parameters $A=1$, $B=1.1$, $s=1$, $\gamma=3/2$, $\mu=1$, $k=\sqrt{2}$. These parameters in particular correspond to a non-monotone shock, and Theorem \ref{lemma_global_existence} applies to them. Here we use integrated variables to avoid the smallness of $E(\lambda)$ near zero, namely, we solve the ODEs
$\phi'=(\hat B(y,\lambda)-\mu^\pm)\phi$, where the compound matrix $\hat B$ is constructed from $\hat M$ defined in \eqref{mat_M1}.\\
In the present situation, the essential spectrum touches the origin $\lambda = 0$ and therefore, to rigorously use the Evans function tool in order to locate eigenvalues with small positive real part, one should first extend it across the essential spectrum. This rigorous analysis goes beyond the aim of the present discussion; however we numerically check the Evans function is well defined and different from zero at $\lambda =0$ by  showing that it is almost constant (and non zero) on a small semi-circular contour without a vertical segment in the unstable half-plane with radius $10^{-6}$ and with center at $\lambda=0$; see Figure \ref{figure_evans_function_small_contour}.

\begin{figure}[H]
\includegraphics[scale=0.6]{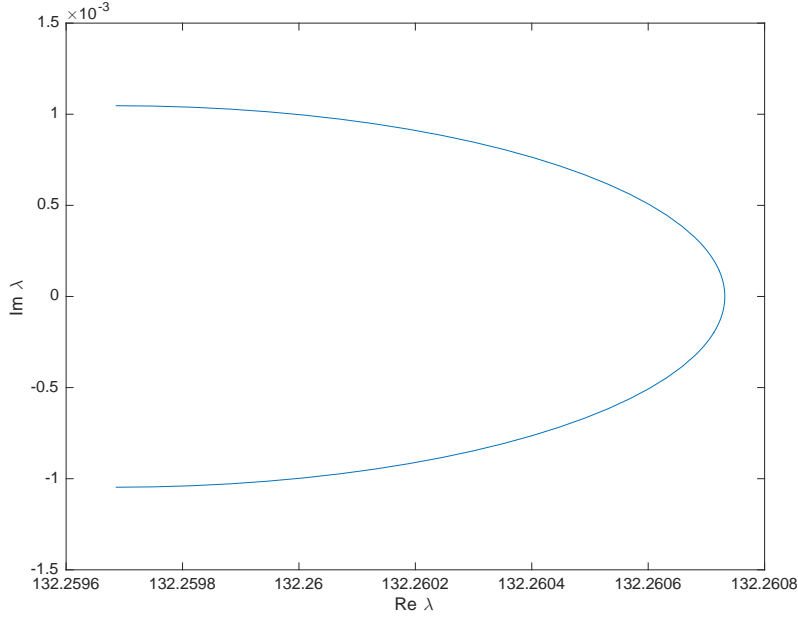}
\caption{The image of a small semi-circular contour without a vertical segment, with radius $10^{-6}$, and center at the origin through $E(\lambda)$.}
\label{figure_evans_function_small_contour}
\end{figure}

 To compute the initial conditions we integrate the reduced Kato ODE
\begin{equation*}
\frac{dr}{d\lambda}=\frac{dP}{d\lambda}r,
\end{equation*}
with the algorithm from \cite{Zumbrun}, that is $|r^1|=1$ eigenvector as before (referring to maximal/minimal decay/growt rate of $B^\pm$) and for $k>0$
\begin{equation*}
r^{k+1}=P^{k+1}r^k.
\end{equation*}
Using a careful analysis of the Evans function at infinity one can show numerically that  there are no eigenvalues for  $|\lambda|\geq 1.9 \cdot 10^4$. Here we complement this information with a numerical evidence of absence of eigenvalues inside that circle.\\
We have to initialize the computation on the real axis and for stability reasons for a not very large values of $\lambda$. For this we use two contours. One semi-circular contour with radius 10, center at $\lambda = 0$ and vertical segment on the imaginary axis. We don't evaluate the Evans function at $0$, but evaluate it till $i 10^{-6}$. Along this contour we integrate the Kato ODE using $10^4$ points. We also use another countour which surrounds a semi-annular region in the right half-plane with two semi-circles with radii $5$ and $1.9 \cdot 10^4$, center at $\lambda = 0$ and vertical segment on the imaginary axis. Along this contour we use $10^7$ points to integrate the Kato ODE. We use higher density near the origin.\\
Then using these initial conditions we compute $E(\lambda)$ with the stiff solver ode15s in matlab, with relative tolerance $10^{-4}$ and we set $L_1=40$. Finally, we apply the symmetry of $E(\bar{\lambda})=\overline{E(\lambda)}$. The union of the areas, surrounded by the two contours covers the whole region of the unstable half-plane inside $|\lambda| \leq 1.9 \cdot 10^4$. The Evans function $E(\lambda)$ is plotted in Figure \ref{figure_evans_fun_2} and its winding number is (approximately) 0, giving a numerical evidence of point spectrum stability.\\
Finally, we note that  if $\lambda \in \mathbb{R}$, then $E(\lambda) \in \mathbb{R}$. Our numerics agrees with this simple observation, because we get  $\Im{E(\lambda)} \approx 0$ for $\lambda = 1.9\cdot10^4$. Moreover, to corroborate this accuracy, we also use the Cauchy integral formula
\begin{equation*}
E(a) = \frac{1}{2 \pi i}\int_{\Gamma}\frac{E(z)}{z-a}dz
\end{equation*}
for $a$ inside the contour $\Gamma$ surrounding the semi-annular region, and,  for $a = 1.9\cdot10^4-20$, we get a relative error less than $5\cdot10^{-4}$. \\
Moreover, we present a computation of the Evans function along a contour, surrounding a semi-annular region with radii $10^{-6}$ and $1.9 \cdot 10^4$, center at $\lambda = 0$ and vertical segment on the imaginary axis. Along the contour we integrate the Kato ODE with $10^7$ points. The winding number of the Evans function is zero. See Figure \ref{evans_function_annular_small_circle}. Using the Cauchy integral formula with $a = 1.9\cdot10^4-20$, we observe a relative error less than $5\cdot10^{-4}$. 
\captionsetup[subfigure]{labelformat=empty}
\begin{figure}
\begin{center}
\begin{subfigure}[b]{1.0\linewidth}
\centering
\includegraphics[scale=0.6]{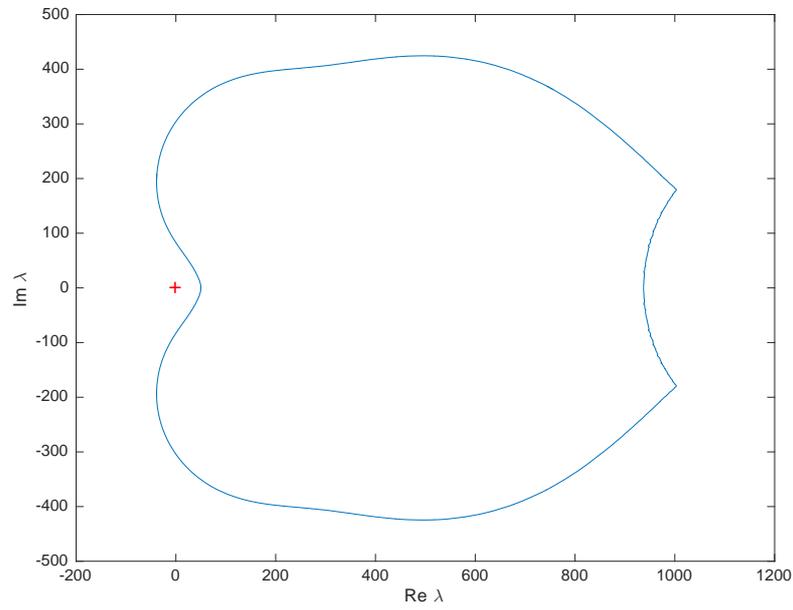}
\caption{(a)}
\end{subfigure}
\begin{subfigure}[b]{1.0\linewidth}
\centering
\includegraphics[scale=0.6]{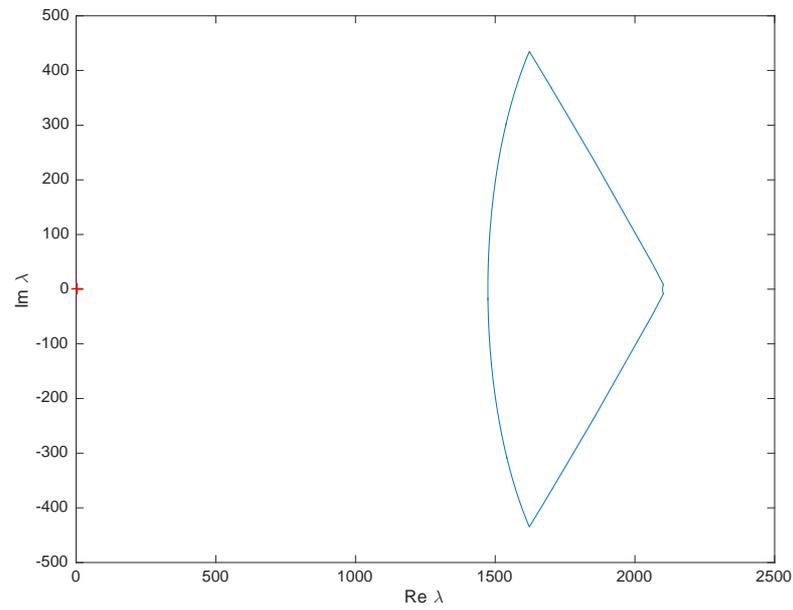}
\caption{(b)}
\end{subfigure}
\caption{(a) The image of a semi-circular contour with radius $10$ through the Evans function $E(\lambda)$. (b) The image of a contour, surrounding a semi-annular region with radii $5$ and $1.9\cdot10^4$ through $E(\lambda)$. The origin is marked in red.}
\label{figure_evans_fun_2}
\end{center}
\end{figure}

\begin{figure}
\centering
\includegraphics[scale=0.6]{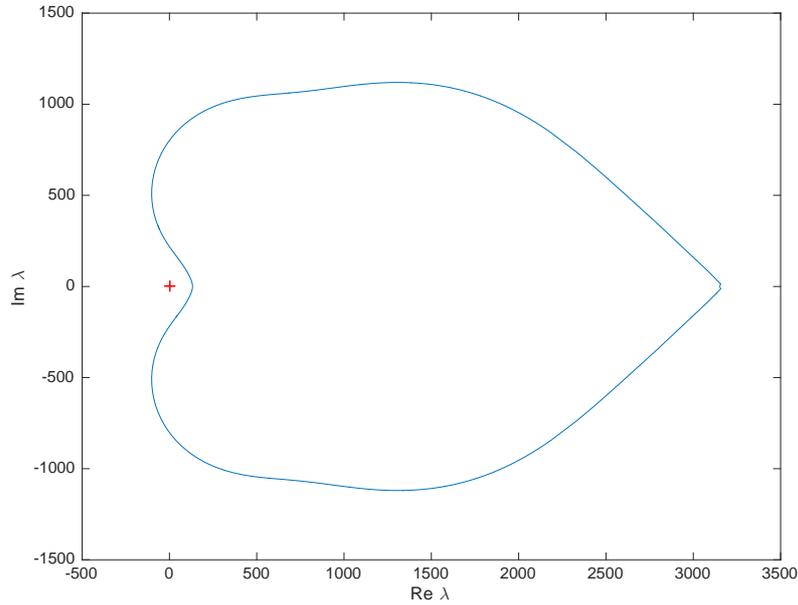}
\caption{The image of a contour, surrounding a semi-annular region with radii $10^{-6}$ and $1.9\cdot10^4$ through $E(\lambda)$. The origin is marked in red.}
\label{evans_function_annular_small_circle}
\end{figure}

\FloatBarrier


\begin{thebibliography}{10}
\bibitem{AM1}  P. Antonelli, P. Marcati, 
On the finite energy weak solutions to a system in Quantum Fluid Dynamics , \textit{Comm. Math. Phys.} 287, 657-686 (2009)
\bibitem{AM2} P. Antonelli, P. Marcati,  The Quantum Hydrodynamics system in two space dimensions, \textit{Arch. Ration. Mech. Anal.} 203 , 499-527  (2012)
\bibitem{AMtf} P. Antonelli, P. Marcati, Finite Energy Global Solutions to a Two-Fluid Model Arising in Superfluidity, \textit{Bull. Inst. Math. Acad. Sin.} 10,349--373 (2015)
\bibitem{AMDCDS} P. Antonelli, P. Marcati, Quantum hydrodynamics with nonlinear interactions, \textit{Discrete Contin. Dyn. Syst.} Ser. S 9, 1--13 (2016)
\bibitem{AS} P. Antonelli, S. Spirito, Global existence of finite energy weak solutions of quantum Navier-Stokes equations, \textit{Arch. Ration. Mech. Anal.} 225, 1161-1199 (2017)

\bibitem{Michele1}
F. Di Michele, P. Marcati, B. Rubino, Steady states and interface transmission conditions for heterogeneous quantum-classical 1-D hydrodynamic model of semiconductor devices. Physica D: Nonlinear Phenomena, 243(1), pp. 1-13, 2013 

\bibitem{Michele}
F. Di Michele, P. Marcati, B. Rubino, Stationary solution for transient quantum hydrodynamics with bohmenian-type boundary conditions, Computational and Applied Mathematics, 36(1), pp. 459-479, 2017

\bibitem{DFM}D. Donatelli, E. Feireisl, P. Marcati, Well/ill posedness for the Euler- Korteweg-Poisson system and related problems, \textit{Comm. Partial Differential Equations}, 40, 1314-1335 (2015)
\bibitem{DM1}D. Donatelli, P. Marcati, Quasineutral limit, dispersion and oscillations for Korteweg type fluids, \textit{SIAM J. Math. Anal.} 47, 2265-2282  (2015)
\bibitem{DM} D. Donatelli, P. Marcati,  Low Mach number limit for the quantum hydrodynamics system, \textit{Res. Math. Sci.} 3, 3-13  (2016)
\bibitem{GLT} J. Giesselmann, C. Lattanzio, and  A.E. Tzavaras,  Relative Energy for the Korteweg Theory and Related Hamiltonian Flows in Gas Dynamics, \textit{Arch. Ration. Mech. Anal.} 223 , 1427-1484  (2017)
\bibitem{Gurevich1}A. V. Gurevich and A. P. Meshcherkin. Expanding self-similar discontinuities and shock waves in dispersive hydrodynamics, \textit{Sov. Phys. JETP}, 60(4):732-740, 1984.
\bibitem{Gurevich}
A. V. Gurevich and L. P. Pitaevskii, Nonstationary structure of a collisionless shock wave, \textit{Sov. Phys. JETP}, 38:291-297 (1974)
\bibitem{Hoefer}M. A. Hoefer, M. J. Ablowitz, I. Coddington, E. A. Cornell, P. Engels, and V. Schweikhard, Dispersive and classical shock waves in Bose-Einstein condensates and gas dynamics, \textit{Phys. Rev. A}, 74:023623 (2006)

\bibitem{Hoefer1}M. A. Hoefer, Shock Waves in Dispersive Eulerian Fluids, \textit{J. Nonlinear Sci.}, Volume 24, Issue 3, pp 525-577, (2014)

\bibitem{Humpherys}J. Humpherys, On the shock wave spectrum for isentropic gas dynamics with capillarity, \textit{J. Differential Equations}, 246(7):2938-2957 (2009)

\bibitem{Kapitula}T. Kapitula, K. Promislow, \textit{Spectral and Dynamical Stability of Nonlinear Waves}, Springer-Verlag New York, 2013

\bibitem{Kato}T. Kato, \textit{Perturbation Theory for Linear Operators}, second edition, Grundlehren der Mathematischen Wissenschaften, Springer-Verlag, Berlin-New York, 1976.

\bibitem{Zhelyazov}
C. Lattanzio, P. Marcati, D. Zhelyazov, Dispersive shocks in Quantum Hydrodynamics with viscosity, arXiv preprint arXiv:1812.10279 (2019).

\bibitem{Nov}S. Novikov, S. V. Manakov, L. P. Pitaevskii, and V. E. Zakharov, \textit{Theory of Solitons}, Consultants Bureau, New York, 1984.

\bibitem{Sagdeev}S.R. Z. Sagdeev,  Kollektivnye protsessy i udarnye volny v razrezhennol plazme (Collective processes and shock waves in a tenuous plasma), in: \textit{Voprosy teorii plazmy (Problems of Plasma Theory)}, Vol. 5, Atomizdat, 1964.

\bibitem{Sandstede}B. Sandstede, Stability of Travelling Waves, \textit{Handbook of Dynamical Systems II}, edited by B. Fiedler, Elsevier (2002), 983-1055

\bibitem{Zak}V. E. Zakharov, Stability Of Periodic Waves Of Finite Amplitude On The Surface Of A Deep Fluid, \textit{Zhurnal Prildadnoi Mekhaniki i Tekhnicheskoi Fiziki}, 9(2), 86-94 (1968)

\bibitem{Zumbrun}K. Zumbrun, A local greedy algorithm and higher order extensions for global numerical continuation of analytically varying subspaces, \textit{Quart. Appl. Math.} Vol. 68, No. 3, pp. 557-561 (2010)
\end{thebibliography}
\end{document}